\theoremstyle{plain}
\newtheorem{definition}{Definition}
\newtheorem{remark}{Remark}
\newtheorem{theorem}{Theorem}
\numberwithin{equation}{section}
\begin{document}
\title[On $MT-$Convexity]{On $MT-$Convexity}
\author{Mevl\"{u}t TUN\c{C}}
\address{Department of Mathematics, Faculty of Art and Sciences, Kilis 7
Aralik University, Kilis, 79000, Turkey}
\email{mevluttunc@kilis.edu.tr}
\author{H\"{u}seyin YILDIRIM}
\address{Department of Mathematics, Faculty of Art and Sciences, Kahramanmara%
\c{s} S\"{u}t\c{c}\"{u} \.{I}mam University, Kahramanmara\c{s}, 46000, Turkey%
}
\email{hyildir@ksu.edu.tr}
\subjclass[2000]{26D15}
\keywords{convexity, AM-GM inequality.}

\begin{abstract}
In this paper, one new classes of convex functions which is called $MT-$%
convex functions are given. We also establish some Hadamard-type
inequalities.
\end{abstract}

\maketitle

\section{Introduction}

The following definition is well known in the literature: A function $%
f:I\rightarrow 
%TCIMACRO{\U{211d} }%
%BeginExpansion
\mathbb{R}
%EndExpansion
,$ $\emptyset \neq I\subseteq 
%TCIMACRO{\U{211d} }%
%BeginExpansion
\mathbb{R}
%EndExpansion
,$ is said to be convex on $I$ if inequality

\begin{equation}
f\left( tx+\left( 1-t\right) y\right) \leq tf\left( x\right) +\left(
1-t\right) f\left( y\right)  \label{co}
\end{equation}%
holds for all $x,y\in I$ and $t\in \left[ 0,1\right] $. Geometrically, this
means that if $P,Q$ and $R$ are three distinct points on the graph of $f$
with $Q$ between $P$ and $R$, then $Q$ is on or below chord $PR$.

Let $f:I\subseteq 
%TCIMACRO{\U{211d} }%
%BeginExpansion
\mathbb{R}
%EndExpansion
\rightarrow 
%TCIMACRO{\U{211d} }%
%BeginExpansion
\mathbb{R}
%EndExpansion
$ be a convex function and $a,b\in I$ with $a<b$. The following double
inequality:$\ $

\begin{equation}
f\left( \frac{a+b}{2}\right) \leq \frac{1}{b-a}\int_{a}^{b}f\left( x\right)
dx\leq \frac{f\left( a\right) +f\left( b\right) }{2}  \label{h-h}
\end{equation}%
is known in the literature as Hadamard's inequality (or H-H inequality) for
convex function. Keep in mind that some of the classical inequalities for
means can come from (\ref{h-h}) for convenient particular selections of the
function $f$. If $f$ is concave, this double inequality hold in the inversed
way.

In \cite{p}, Pachpatte established two Hadamard-type inequalities for
product of convex functions.

\begin{theorem}
Let $f,g:[a,b]\subseteq \mathbb{R}\rightarrow \lbrack 0,\infty )$ be convex
functions on $[a,b]$, $a<b$. Then%
\begin{equation}
\frac{1}{b-a}\int_{a}^{b}f(x)g(x)dx\leq \frac{1}{3}M(a,b)+\frac{1}{6}N(a,b)
\label{PA}
\end{equation}%
and%
\begin{equation*}
2f\left( \frac{a+b}{2}\right) g\left( \frac{a+b}{2}\right) \leq \frac{1}{b-a}%
\int_{a}^{b}f(x)g(x)dx+\frac{1}{6}M(a,b)+\frac{1}{3}N(a,b)
\end{equation*}%
where $M(a,b)=f(a)g(a)+f(b)g(b)$ and $N(a,b)=f(a)g(b)+f(b)g(a)$.
\end{theorem}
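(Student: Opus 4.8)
The plan is to reduce both inequalities to elementary integrals over $[0,1]$ by combining the standard convexity parametrization with the hypothesis $f,g\geq 0$. For the first inequality I would write each point of $[a,b]$ as $tb+(1-t)a$ with $t\in[0,1]$, so that convexity yields $f(tb+(1-t)a)\leq tf(b)+(1-t)f(a)$ and the analogous estimate for $g$. Because both functions are nonnegative, these two bounds may be multiplied, giving
\[
f(tb+(1-t)a)\,g(tb+(1-t)a)\leq t^{2}f(b)g(b)+(1-t)^{2}f(a)g(a)+t(1-t)N(a,b).
\]
After the substitution $x=tb+(1-t)a$, integrating over $t\in[0,1]$ and using $\int_{0}^{1}t^{2}\,dt=\int_{0}^{1}(1-t)^{2}\,dt=\frac{1}{3}$ together with $\int_{0}^{1}t(1-t)\,dt=\frac{1}{6}$ produces exactly $\frac{1}{3}M(a,b)+\frac{1}{6}N(a,b)$.

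For the second inequality I would start from the midpoint splitting $\frac{a+b}{2}=\frac{1}{2}\bigl(tb+(1-t)a\bigr)+\frac{1}{2}\bigl(ta+(1-t)b\bigr)$ and apply convexity to get
\[
f\!\left(\frac{a+b}{2}\right)\leq \frac{1}{2}f\bigl(tb+(1-t)a\bigr)+\frac{1}{2}f\bigl(ta+(1-t)b\bigr),
\]
together with the corresponding bound for $g$. Multiplying these (again using nonnegativity) and abbreviating $u=tb+(1-t)a$, $v=ta+(1-t)b$ produces four terms: the two diagonal products $f(u)g(u)$, $f(v)g(v)$, and the two cross products $f(u)g(v)$, $f(v)g(u)$. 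After integrating in $t$, each diagonal term reproduces $\frac{1}{b-a}\int_{a}^{b}fg$ by the same change of variable as before.

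The main obstacle is controlling the cross terms $f(u)g(v)+f(v)g(u)$, since there $f$ and $g$ are evaluated at reflected arguments. I would bound each factor separately by convexity — for instance $f(u)\leq tf(b)+(1-t)f(a)$ and $g(v)\leq tg(a)+(1-t)g(b)$, and symmetrically for the other cross term — then multiply and add. The mixed expansion collapses to $2t(1-t)M(a,b)+\bigl[t^{2}+(1-t)^{2}\bigr]N(a,b)$, whose integral over $[0,1]$ equals $\frac{1}{3}M(a,b)+\frac{2}{3}N(a,b)$. Assembling the four contributions gives
\[
4f\!\left(\frac{a+b}{2}\right)g\!\left(\frac{a+b}{2}\right)\leq \frac{2}{b-a}\int_{a}^{b}f(x)g(x)\,dx+\frac{1}{3}M(a,b)+\frac{2}{3}N(a,b),
\]
and dividing by $2$ yields the second inequality. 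The only subtle point throughout is the legitimacy of multiplying the two convexity estimates, which is precisely where the assumption $f,g\geq 0$ enters.
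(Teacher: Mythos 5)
Your proof is correct: both the multiplication of the convexity bounds (legitimate because $f,g\geq 0$) and the integral evaluations $\int_{0}^{1}t^{2}\,dt=\tfrac{1}{3}$, $\int_{0}^{1}t(1-t)\,dt=\tfrac{1}{6}$ are sound, and the assembly of the midpoint inequality, including the treatment of the cross terms $f(u)g(v)+f(v)g(u)$, yields exactly the stated constants. Note that the paper itself gives no proof of this statement --- it is quoted as background from Pachpatte's paper \cite{p} --- so there is no in-paper argument to compare against; your argument is the standard one, and it is the same multiply-and-integrate technique the paper later uses to prove its own $MT$-convex product inequalities.
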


\bigskip We recall the well-known AM-GM inequality for n positive real
numbers which can be stated as follows.

\bigskip If $p_{1},...,p_{n}$ are positive numbers which sum to 1 and\ $f$\
is a real continuous function that is concave up, then%
\begin{equation*}
\sum_{i=1}^{n}p_{i}f\left( x_{i}\right) \geq f\left(
\sum_{i=1}^{n}p_{i}x_{i}\right)
\end{equation*}%
A special case is 
\begin{equation*}
\sqrt[n]{x_{1}x_{2}...x_{n}}\leq \frac{x_{1}+x_{2}+...+x_{n}}{n}
\end{equation*}%
with equality iff $x_{1}=x_{2}=...=x_{n}.$

\bigskip We recall the well-known AM-GM inequality for two positive real
numbers which can be stated as follows.

If $x,y\in 
%TCIMACRO{\U{211d} }%
%BeginExpansion
\mathbb{R}
%EndExpansion
^{+},$ then%
\begin{equation}
\sqrt{xy}\leq \frac{x+y}{2}.  \label{ag}
\end{equation}%
with equality if and only if $x=y.$

This inequality has many simple proofs. For example, a proof based on the
concavity of the logarithmic function is presented in various sources, and
the original reference is Jensen's paper \cite{j}. A proof based on
induction, given by Cauchy in 1821, is presented in many sources, as for
example in \cite{b}, pp. 1--2.

In the following section our main results are given. We establish new a
class of convex functions and then we obtain new Hadamard type inequalities
for the new class of convex function.

\begin{definition}
\lbrack See \cite{A}] Two functions $f:X\rightarrow 
%TCIMACRO{\U{211d} }%
%BeginExpansion
\mathbb{R}
%EndExpansion
$ and $g:X\rightarrow 
%TCIMACRO{\U{211d} }%
%BeginExpansion
\mathbb{R}
%EndExpansion
$ are said to be similarly ordered, shortly $f$ s.o. $g$, if%
\begin{equation*}
(f(x)-f(y))(g(x)-g(y))\geq 0
\end{equation*}%
for every $x,y\in X.$
\end{definition}

\section{$MT-$Convexity and Related Results}

\begin{remark}
\bigskip If we take $x=t$ and $y=1-t$ in (\ref{ag}), we have%
\begin{equation}
1\leq \frac{1}{2\sqrt{t\left( 1-t\right) }}  \label{ga}
\end{equation}%
for all $t\in \left( 0,1\right) .$
\end{remark}

\begin{definition}
A function $f:I\subseteq 
%TCIMACRO{\U{211d} }%
%BeginExpansion
\mathbb{R}
%EndExpansion
\rightarrow 
%TCIMACRO{\U{211d} }%
%BeginExpansion
\mathbb{R}
%EndExpansion
$ is said to belong to the class of $MT\left( I\right) $ if it is
nonnegative and for all $x,y\in I$ and $t\in \left( 0,1\right) $ satisfies
the inequality;%
\begin{equation}
f\left( tx+\left( 1-t\right) y\right) \leq \frac{\sqrt{t}}{2\sqrt{1-t}}%
f\left( x\right) +\frac{\sqrt{1-t}}{2\sqrt{t}}f\left( y\right) .  \label{1}
\end{equation}
\end{definition}

\begin{remark}
In (\ref{1}), if we take $t=1/2$, inequality (\ref{1}) reduce to Jensen
convex.
\end{remark}

\begin{theorem}
Let $f\in MT\left( I\right) $ , $a,b\in I$ with $a<b$ and $f\in L_{1}\left[
a,b\right] .$Then%
\begin{equation}
f\left( \frac{a+b}{2}\right) \leq \frac{1}{b-a}\int_{a}^{b}f\left( x\right)
dx  \label{2}
\end{equation}%
and%
\begin{equation}
\frac{2}{b-a}\int_{a}^{b}\tau \left( x\right) f\left( x\right) dx\leq \frac{%
f\left( a\right) +f\left( b\right) }{2}  \label{3}
\end{equation}%
where $\tau \left( x\right) =\frac{\sqrt{\left( b-x\right) \left( x-a\right) 
}}{b-a},$ $x\in \left[ a,b\right] $
\end{theorem}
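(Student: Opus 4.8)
The plan is to establish the two inequalities separately, deriving each from the defining inequality \eqref{1} by a suitable substitution and an integration in the parameter $t$; the nonnegativity of $f$ is used only to preserve inequality directions after multiplying by a positive weight.

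For the left inequality \eqref{2}, I would first specialize \eqref{1} to $t=1/2$ to recover the midpoint (Jensen) convexity noted in the preceding remark, namely
\begin{equation*}
f\left( \frac{u+v}{2}\right) \leq \frac{1}{2}f(u)+\frac{1}{2}f(v)
\end{equation*}
for all $u,v\in I$. For each fixed $t\in[0,1]$ I then choose the symmetric pair $u=ta+(1-t)b$ and $v=(1-t)a+tb$, so that $\tfrac{u+v}{2}=\tfrac{a+b}{2}$, giving
\begin{equation*}
f\left( \frac{a+b}{2}\right) \leq \frac{1}{2}f\big(ta+(1-t)b\big)+\frac{1}{2}f\big((1-t)a+tb\big).
\end{equation*}
Integrating over $t\in[0,1]$ and observing that, via the linear changes of variable $x=ta+(1-t)b$ and $x=(1-t)a+tb$, both integrals on the right equal $\frac{1}{b-a}\int_a^b f(x)\,dx$, yields \eqref{2}.

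For the weighted inequality \eqref{3}, I would apply the single change of variable $x=ta+(1-t)b$, under which $b-x=t(b-a)$ and $x-a=(1-t)(b-a)$, hence $\tau(x)=\sqrt{t(1-t)}$ and
\begin{equation*}
\frac{2}{b-a}\int_a^b \tau(x)f(x)\,dx = 2\int_0^1 \sqrt{t(1-t)}\,f\big(ta+(1-t)b\big)\,dt.
\end{equation*}
The key move is to apply \eqref{1} with the endpoints $x=a$, $y=b$ and multiply through by the nonnegative weight $2\sqrt{t(1-t)}$. The decisive cancellation is
\begin{equation*}
2\sqrt{t(1-t)}\cdot\frac{\sqrt{t}}{2\sqrt{1-t}}=t,\qquad 2\sqrt{t(1-t)}\cdot\frac{\sqrt{1-t}}{2\sqrt{t}}=1-t,
\end{equation*}
which collapses the square-root coefficients of \eqref{1} into the clean bound $2\sqrt{t(1-t)}\,f(ta+(1-t)b)\le t\,f(a)+(1-t)f(b)$. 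Integrating over $t\in[0,1]$ and using $\int_0^1 t\,dt=\int_0^1(1-t)\,dt=\tfrac12$ produces \eqref{3}.

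The only genuine obstacle I anticipate is recognizing the correct weight in \eqref{3}: one must see that the factor $2\sqrt{t(1-t)}$ coming from $\tau(x)$ is exactly what neutralizes the $1/\sqrt{1-t}$ and $1/\sqrt{t}$ singularities in the coefficients of \eqref{1}. Once this is observed, both parts reduce to elementary integration, so I expect no further difficulty.
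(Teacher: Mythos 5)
Your proposal is correct and follows essentially the same route as the paper: the midpoint case $t=1/2$ of \eqref{1} plus the symmetric substitution for \eqref{2}, and the cancellation $2\sqrt{t(1-t)}\cdot\frac{\sqrt{t}}{2\sqrt{1-t}}=t$, $2\sqrt{t(1-t)}\cdot\frac{\sqrt{1-t}}{2\sqrt{t}}=1-t$ followed by integration for \eqref{3}. The only (immaterial) difference is that the paper symmetrizes by adding the two inequalities for $f(ta+(1-t)b)$ and $f((1-t)a+tb)$ before integrating, while you use a single one; these are equivalent because the two parameter integrals coincide under the change of variables, so your version is in fact a slight streamlining.
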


\begin{proof}
Since $f\in MT\left( I\right) $, we have, for all, $x,y\in I$ (with $t=\frac{%
1}{2}$ in (\ref{1})) that%
\begin{equation*}
f\left( \frac{x+y}{2}\right) \leq \frac{f\left( x\right) +f\left( y\right) }{%
2}
\end{equation*}%
i.e. with $x=ta+\left( 1-t\right) b,$ $y=\left( 1-t\right) a+tb,$%
\begin{equation*}
f\left( \frac{a+b}{2}\right) \leq \frac{1}{2}\left( f\left( ta+\left(
1-t\right) b\right) +f\left( \left( 1-t\right) a+tb\right) \right) .
\end{equation*}%
By integrating, we get%
\begin{equation}
f\left( \frac{a+b}{2}\right) \leq \frac{1}{2}\left[ \int_{0}^{1}f\left(
ta+\left( 1-t\right) b\right) dt+\int_{0}^{1}f\left( \left( 1-t\right)
a+tb\right) dt\right] ,  \label{4}
\end{equation}%
Since%
\begin{equation*}
\int_{0}^{1}f\left( ta+\left( 1-t\right) b\right) dt=\int_{0}^{1}f\left(
\left( 1-t\right) a+tb\right) dt=\frac{1}{b-a}\int_{a}^{b}f\left( u\right)
du,
\end{equation*}%
we get the inequality (\ref{2}) from (\ref{4}).

For the proof of (\ref{3}), we first note that if $f\in MT\left( I\right) $,
then for all $a,b\in I$ and $t\in \left[ 0,1\right] $, it yields%
\begin{equation*}
2\sqrt{t\left( 1-t\right) }f\left( ta+\left( 1-t\right) b\right) \leq
tf\left( a\right) +\left( 1-t\right) f\left( b\right)
\end{equation*}%
and%
\begin{equation*}
2\sqrt{t\left( 1-t\right) }f\left( \left( 1-t\right) a+tb\right) \leq \left(
1-t\right) f\left( a\right) +tf\left( b\right) .
\end{equation*}%
By adding these inequalities and integrating on $t$ over $\left[ 0,1\right] $%
, we obtain%
\begin{equation}
\int_{0}^{1}\sqrt{t\left( 1-t\right) }\left[ f\left( ta+\left( 1-t\right)
b\right) +f\left( \left( 1-t\right) a+tb\right) \right] dt\leq \frac{f\left(
a\right) +f\left( b\right) }{2}  \label{5}
\end{equation}%
Therefore,%
\begin{eqnarray}
\int_{0}^{1}\sqrt{t\left( 1-t\right) }f\left( ta+\left( 1-t\right) b\right)
dt &=&\int_{0}^{1}\sqrt{t\left( 1-t\right) }f\left( \left( 1-t\right)
a+tb\right) dt  \label{6} \\
&=&\frac{1}{b-a}\int_{0}^{1}\frac{\sqrt{\left( b-x\right) \left( x-a\right) }%
}{\left( b-a\right) ^{2}}f\left( x\right) dx.  \notag
\end{eqnarray}%
We get (\ref{3}) by combining (\ref{5}) with (\ref{6}) and the proof is
completed.

The constant $1$ in (\ref{2}) is the best possible because this inequality
obviously reduces to an equality for the function $f\left( x\right) =1$ for
all $a\leq x\leq b.$ Additionally, this function is to in the class $%
MT\left( I\right) $, because%
\begin{eqnarray*}
\frac{\sqrt{t}}{2\sqrt{\left( 1-t\right) }}f\left( x\right) +\frac{\sqrt{%
\left( 1-t\right) }}{2\sqrt{t}}f\left( y\right) &\geq &\frac{\sqrt{t}}{2%
\sqrt{\left( 1-t\right) }}+\frac{\sqrt{\left( 1-t\right) }}{2\sqrt{t}}%
=g\left( t\right) \\
&\geq &%
\begin{array}{cc}
\min g\left( t\right) =g\left( \frac{1}{2}\right) & 0<t<1%
\end{array}
\\
&=&1\geq f\left( tx+\left( 1-t\right) y\right)
\end{eqnarray*}%
for all $x,y\in \left[ a,b\right] $ and $t\in \left[ 0,1\right] .$Thus, the
proof is completed.
\end{proof}

\begin{remark}
In (\ref{3}), if we take $x=\frac{a+b}{2}$, inequality (\ref{3}) reduce to
Jensen's inequality.
\end{remark}

\begin{theorem}
Let $f\in MT\left( I\right) $ , $a,b\in I$ with $a<b$ and $f\in L_{1}\left[
a,b\right] .$Then%
\begin{equation*}
\frac{\pi }{2}f\left( \frac{a+b}{2}\right) \leq f\left( a\right) +f\left(
b\right) .
\end{equation*}
\end{theorem}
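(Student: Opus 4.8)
The plan is to begin from the midpoint (Jensen) case of the defining inequality, then feed two symmetric convex combinations into the $MT$-inequality, and finally integrate the resulting pointwise estimate against the natural weight, letting $\pi$ appear through a Beta integral. The last step is where the specific constant $\pi/2$ is forced, so the computation is organized to make that integral the only nontrivial ingredient.

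First I would record the midpoint inequality: putting $t=1/2$ in (\ref{1}) gives $f\left(\frac{x+y}{2}\right)\le \frac{f(x)+f(y)}{2}$, exactly as in the proof of the previous theorem. Applying this with $x=ta+(1-t)b$ and $y=(1-t)a+tb$, whose average is $\frac{a+b}{2}$, yields
\[
f\left(\frac{a+b}{2}\right)\le \frac12\left[f(ta+(1-t)b)+f((1-t)a+tb)\right]
\]
for every $t\in(0,1)$. Next I would estimate each term on the right by the $MT$-inequality (\ref{1}) itself, using $x=a,\,y=b$ for the first term and $x=b,\,y=a$ for the second. Adding the two, the coefficients combine symmetrically as
\[
\frac{\sqrt t}{2\sqrt{1-t}}+\frac{\sqrt{1-t}}{2\sqrt t}=\frac{1}{2\sqrt{t(1-t)}},
\]
so that
\[
f(ta+(1-t)b)+f((1-t)a+tb)\le \frac{1}{2\sqrt{t(1-t)}}\left(f(a)+f(b)\right).
\]
Combining this with the previous display and clearing the denominator gives the pointwise estimate
\[
4\sqrt{t(1-t)}\,f\left(\frac{a+b}{2}\right)\le f(a)+f(b),\qquad t\in(0,1).
\]

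Finally I would integrate this inequality over $t\in[0,1]$. Since the right-hand side is constant in $t$, the whole matter reduces to the single Beta integral $\int_0^1\sqrt{t(1-t)}\,dt$, which I would evaluate by the substitution $t=\sin^2\theta$ (equivalently, as $B(3/2,3/2)=\Gamma(3/2)^2/\Gamma(3)$) to obtain the value $\pi/8$. This produces $4\cdot\frac{\pi}{8}\,f\left(\frac{a+b}{2}\right)=\frac{\pi}{2}f\left(\frac{a+b}{2}\right)\le f(a)+f(b)$, which is the assertion. I expect the only substantive step, and the exact point at which $\pi$ enters, to be this Beta integral; every other manipulation is elementary algebra with the defining inequality. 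The one place requiring care is bookkeeping which argument plays the role of $x$ versus $y$ in the two applications of (\ref{1}), since it is precisely the symmetry of those two applications that makes the coefficients collapse to $\tfrac{1}{2\sqrt{t(1-t)}}$.
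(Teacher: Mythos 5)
Your proposal is correct and follows essentially the same route as the paper's own proof: the midpoint case $t=1/2$ of (\ref{1}) applied to the symmetric pair $ta+(1-t)b$ and $(1-t)a+tb$, then the $MT$-inequality on each term so the coefficients collapse to $\frac{1}{2\sqrt{t(1-t)}}$, and finally integration of $4\sqrt{t(1-t)}\,f\left(\frac{a+b}{2}\right)\leq f(a)+f(b)$ over $t\in[0,1]$ using $\int_{0}^{1}\sqrt{t(1-t)}\,dt=\frac{\pi}{8}$. The only cosmetic difference is that the paper writes the chain of estimates in one display rather than isolating the midpoint step as a separate lemma; the mathematics is identical.
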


\begin{proof}
Since $f\in MT\left( I\right) $, we have%
\begin{eqnarray*}
f\left( \frac{a+b}{2}\right) &\leq &f\left( \frac{ta+\left( 1-t\right) b}{2}+%
\frac{\left( 1-t\right) a+tb}{2}\right) \\
&\leq &\frac{1}{2}\left( f\left( ta+\left( 1-t\right) b\right) +f\left(
\left( 1-t\right) a+tb\right) \right) \\
&\leq &\frac{1}{2}\left( \frac{\sqrt{t}}{2\sqrt{\left( 1-t\right) }}+\frac{%
\sqrt{\left( 1-t\right) }}{2\sqrt{t}}\right) \left( f\left( a\right)
+f\left( b\right) \right)
\end{eqnarray*}%
Moreover%
\begin{equation*}
4\sqrt{t\left( 1-t\right) }f\left( \frac{a+b}{2}\right) \leq f\left(
a\right) +f\left( b\right)
\end{equation*}%
By integrating, we get%
\begin{eqnarray*}
4f\left( \frac{a+b}{2}\right) \int_{0}^{1}\sqrt{t\left( 1-t\right) }dt &\leq
&f\left( a\right) +f\left( b\right) \\
\frac{\pi }{2}f\left( \frac{a+b}{2}\right) &\leq &f\left( a\right) +f\left(
b\right) .
\end{eqnarray*}%
The proof is completed.
\end{proof}

\bigskip

\begin{theorem}
\bigskip Let $f,g\in MT\left( I\right) $ , $a,b\in I$ with $a<b$ and $fg\in
L_{1}\left[ a,b\right] .$Then we have the inequality%
\begin{eqnarray}
&&\frac{1}{b-a}\int_{a}^{b}\mu \left( x\right) f\left( x\right) g\left(
x\right) dx  \label{x} \\
&\leq &\frac{1}{12}\left[ f\left( a\right) g\left( a\right) +f\left(
b\right) g\left( b\right) \right] +\frac{1}{24}\left[ f\left( a\right)
g\left( b\right) +f\left( b\right) g\left( a\right) \right]  \notag
\end{eqnarray}%
where $\mu \left( x\right) =\frac{\left( b-x\right) \left( x-a\right) }{%
\left( b-a\right) ^{2}}$ $,$ $x\in \left[ a,b\right] .$\bigskip
\end{theorem}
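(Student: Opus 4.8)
The plan is to apply the defining inequality (\ref{1}) to $f$ and $g$ separately, using the two endpoints $a$ and $b$ as arguments, and then to exploit nonnegativity to multiply the two resulting bounds. Concretely, for $t\in(0,1)$ I would set $x=ta+(1-t)b$ and write
\[
f(ta+(1-t)b)\leq \frac{\sqrt{t}}{2\sqrt{1-t}}f(a)+\frac{\sqrt{1-t}}{2\sqrt{t}}f(b),
\]
together with the analogous bound for $g$. Since $f,g\geq 0$, multiplying these two inequalities yields an upper bound for the product $f(ta+(1-t)b)\,g(ta+(1-t)b)$. The key algebraic simplification is that each of the two cross terms carries the factor $\frac{\sqrt{t}}{2\sqrt{1-t}}\cdot\frac{\sqrt{1-t}}{2\sqrt{t}}=\tfrac14$, so the product bound collapses to
\[
\frac{t}{4(1-t)}f(a)g(a)+\frac14\bigl[f(a)g(b)+f(b)g(a)\bigr]+\frac{1-t}{4t}f(b)g(b).
\]

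Next I would convert the left-hand side of (\ref{x}) into an integral in $t$. Under the substitution $x=ta+(1-t)b$ one computes $b-x=t(b-a)$ and $x-a=(1-t)(b-a)$, hence $\mu(x)=t(1-t)$ and $dx=-(b-a)\,dt$; the limits reverse, giving
\[
\frac{1}{b-a}\int_a^b \mu(x)f(x)g(x)\,dx=\int_0^1 t(1-t)\,f(ta+(1-t)b)\,g(ta+(1-t)b)\,dt .
\]
I would then multiply the product bound above by the weight $t(1-t)$. This is the crucial step: the factor $t(1-t)$ exactly cancels the singular denominators, turning $\frac{t}{4(1-t)}$ into $\frac{t^2}{4}$ and $\frac{1-t}{4t}$ into $\frac{(1-t)^2}{4}$, so the integrand becomes the polynomial
\[
\frac{t^2}{4}f(a)g(a)+\frac{t(1-t)}{4}\bigl[f(a)g(b)+f(b)g(a)\bigr]+\frac{(1-t)^2}{4}f(b)g(b).
\]

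Finally I would integrate term by term, using the elementary values $\int_0^1 t^2\,dt=\int_0^1(1-t)^2\,dt=\tfrac13$ and $\int_0^1 t(1-t)\,dt=\tfrac16$. These produce the coefficient $\tfrac14\cdot\tfrac13=\tfrac1{12}$ for the diagonal terms $f(a)g(a)$ and $f(b)g(b)$, and $\tfrac14\cdot\tfrac16=\tfrac1{24}$ for the mixed terms $f(a)g(b)$ and $f(b)g(a)$, which is precisely the right-hand side of (\ref{x}). The only point requiring care is the singularity of the coefficients in (\ref{1}) as $t\to0^+$ and $t\to1^-$; but the weight $\mu(x)=t(1-t)$ vanishes there at exactly the matching order, so the products are bounded polynomials and no convergence issue arises. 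This also explains why the weight $\mu$ must appear on the left and why the whole argument reduces to these Beta-type integrals.
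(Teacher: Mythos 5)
Your proposal is correct and follows essentially the same route as the paper: apply the $MT$-inequality (\ref{1}) to $f$ and $g$ at $ta+(1-t)b$, multiply the two bounds using nonnegativity so the cross terms collapse to $\tfrac14$, weight by $t(1-t)$ to cancel the singular denominators, and integrate the resulting polynomial to get the coefficients $\tfrac1{12}$ and $\tfrac1{24}$. In fact you are slightly more careful than the paper, which leaves the substitution $x=ta+(1-t)b$ and the identification $\mu(x)=t(1-t)$ implicit in its final "by integrating" step.
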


\begin{proof}
Since $f$,$g\in MT(I),$ we have%
\begin{eqnarray*}
f\left( ta+\left( 1-t\right) b\right) &\leq &\frac{\sqrt{t}}{2\sqrt{1-t}}%
f\left( a\right) +\frac{\sqrt{1-t}}{2\sqrt{t}}f\left( b\right) \\
g\left( ta+\left( 1-t\right) b\right) &\leq &\frac{\sqrt{t}}{2\sqrt{1-t}}%
g\left( a\right) +\frac{\sqrt{1-t}}{2\sqrt{t}}g\left( b\right)
\end{eqnarray*}%
Since $f$ and $g$ are nonnegative, we write that%
\begin{eqnarray*}
&&f\left( ta+\left( 1-t\right) b\right) g\left( ta+\left( 1-t\right) b\right)
\\
&\leq &\frac{t}{4\left( 1-t\right) }f\left( a\right) g\left( a\right) +\frac{%
1}{4}\left( f\left( a\right) g\left( b\right) +f\left( b\right) g\left(
a\right) \right) +\frac{1-t}{4t}f\left( b\right) g\left( b\right) \\
&=&\frac{t^{2}}{4t\left( 1-t\right) }f\left( a\right) g\left( a\right) +%
\frac{t\left( 1-t\right) }{4t\left( 1-t\right) }\left( f\left( a\right)
g\left( b\right) +f\left( b\right) g\left( a\right) \right) +\frac{\left(
1-t\right) ^{2}}{4t\left( 1-t\right) }f\left( b\right) g\left( b\right)
\end{eqnarray*}%
Consequently,%
\begin{eqnarray*}
&&t\left( 1-t\right) f\left( ta+\left( 1-t\right) b\right) g\left( ta+\left(
1-t\right) b\right) \\
&\leq &\frac{1}{4}\left\{ t^{2}f\left( a\right) g\left( a\right) +t\left(
1-t\right) \left[ f\left( a\right) g\left( b\right) +f\left( b\right)
g\left( a\right) \right] +\left( 1-t\right) ^{2}f\left( b\right) g\left(
b\right) \right\}
\end{eqnarray*}%
By integrating, we get%
\begin{eqnarray*}
&&\frac{1}{b-a}\int_{a}^{b}\frac{\left( b-x\right) \left( x-a\right) }{%
\left( b-a\right) ^{2}}f\left( x\right) g\left( x\right) dx \\
&\leq &\frac{1}{12}\left[ f\left( a\right) g\left( a\right) +f\left(
b\right) g\left( b\right) \right] +\frac{1}{24}\left[ f\left( a\right)
g\left( b\right) +f\left( b\right) g\left( a\right) \right] .
\end{eqnarray*}%
The proof is completed.
\end{proof}

\begin{remark}
\bigskip If we choose $x=\frac{a+b}{2}$ in the inequality (\ref{x}), we
obtain the special state of the inequality (\ref{PA}).
\end{remark}

\begin{theorem}
\bigskip Let $f,g$ be similarly ordered, nonnegative and $MT-convex$
functions on $I$, $a,b\in I$ with $a<b$ and $fg\in L_{1}\left[ a,b\right] .$%
Then we have the inequality%
\begin{equation*}
\frac{1}{b-a}\int_{a}^{b}\mu \left( x\right) f\left( x\right) g\left(
x\right) dx\leq \frac{f\left( a\right) g\left( a\right) +f\left( b\right)
g\left( b\right) }{8}
\end{equation*}%
where $\mu \left( x\right) =\frac{\left( b-x\right) \left( x-a\right) }{%
\left( b-a\right) ^{2}}$ $,$ $x\in \left[ a,b\right] .$\bigskip
\end{theorem}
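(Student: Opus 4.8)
The plan is to derive this inequality directly as a corollary of the previous theorem, using the similarly ordered hypothesis only at the endpoints to collapse the two-term right-hand side into a single term. Throughout, write $M=f(a)g(a)+f(b)g(b)$ and $N=f(a)g(b)+f(b)g(a)$. Since $f,g\in MT(I)$ are nonnegative, the previous theorem already supplies
\[
\frac{1}{b-a}\int_a^b \mu(x)f(x)g(x)\,dx\leq \frac{1}{12}M+\frac{1}{24}N,
\]
so the entire task reduces to showing that the similarly ordered assumption lets us replace the right-hand side by $\frac{1}{8}M$.

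First I would invoke the definition of similarly ordered functions at the particular choice $x=a$, $y=b$, which gives $(f(a)-f(b))(g(a)-g(b))\geq 0$. Expanding this product yields $f(a)g(a)-f(a)g(b)-f(b)g(a)+f(b)g(b)\geq 0$, i.e.\ $N\leq M$. This is the single substantive input beyond the previous theorem.

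Then I would complete the estimate by pure arithmetic: since $\frac{1}{12}=\frac{2}{24}$, the bound $N\leq M$ gives $\frac{1}{12}M+\frac{1}{24}N\leq \frac{2}{24}M+\frac{1}{24}M=\frac{3}{24}M=\frac{1}{8}M$, which is precisely the asserted upper bound $\frac{1}{8}[f(a)g(a)+f(b)g(b)]$. There is no genuine obstacle here; the only point worth isolating is the observation that the similarly ordered condition, evaluated exactly at the endpoints $a$ and $b$, is what controls the mixed term $N$ by $M$. If one prefers a self-contained argument, the same conclusion follows by repeating the derivation of the previous proof---multiplying the two $MT$-convexity inequalities at $ta+(1-t)b$, multiplying through by $t(1-t)$, and integrating in $t$ over $[0,1]$---and inserting $N\leq M$ before integrating, so that the coefficient of the cross term is absorbed into that of $M$.
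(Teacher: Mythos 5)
Your proposal is correct. The key input---the similarly ordered condition evaluated at $x=a$, $y=b$, which gives $N=f(a)g(b)+f(b)g(a)\leq f(a)g(a)+f(b)g(b)=M$---is exactly the inequality the paper relies on, so the mathematical content is the same; the difference is structural. You invoke the preceding product theorem as a black box and perform the replacement $N\leq M$ \emph{after} integration, computing $\frac{1}{12}M+\frac{1}{24}N\leq\frac{1}{12}M+\frac{1}{24}M=\frac{1}{8}M$. The paper instead never cites that theorem: it repeats the theorem's derivation from scratch, inserting $N\leq M$ \emph{pointwise} into the integrand, so that $t^{2}f(a)g(a)+t(1-t)N+(1-t)^{2}f(b)g(b)$ is replaced by $t^{2}f(a)g(a)+t(1-t)M+(1-t)^{2}f(b)g(b)=tf(a)g(a)+(1-t)f(b)g(b)$, and only then integrates in $t$. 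Since the bound $N\leq M$ involves constants independent of $t$, applying it before or after integrating is interchangeable, and both routes produce the constant $\frac{1}{8}$. Your version is shorter, avoids duplicating the integration, and makes the logical dependence on the previous theorem explicit; the paper's version is self-contained, with the algebraic collapse $t^{2}+t(1-t)=t$ as a small bonus simplification. The self-contained variant you sketch in your final sentence is, in fact, precisely the paper's proof.
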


\begin{proof}
Since $f$,$g\in MT(I),$ we have%
\begin{eqnarray*}
f\left( ta+\left( 1-t\right) b\right) &\leq &\frac{\sqrt{t}}{2\sqrt{1-t}}%
f\left( a\right) +\frac{\sqrt{1-t}}{2\sqrt{t}}f\left( b\right) \\
g\left( ta+\left( 1-t\right) b\right) &\leq &\frac{\sqrt{t}}{2\sqrt{1-t}}%
g\left( a\right) +\frac{\sqrt{1-t}}{2\sqrt{t}}g\left( b\right)
\end{eqnarray*}%
Since $f$ and $g$ are nonnegative and similarly ordered, we write that%
\begin{eqnarray*}
&&f\left( ta+\left( 1-t\right) b\right) g\left( ta+\left( 1-t\right) b\right)
\\
&\leq &\frac{t}{4\left( 1-t\right) }f\left( a\right) g\left( a\right) +\frac{%
1}{4}\left( f\left( a\right) g\left( b\right) +f\left( b\right) g\left(
a\right) \right) +\frac{1-t}{4t}f\left( b\right) g\left( b\right) \\
&=&\frac{t^{2}}{4t\left( 1-t\right) }f\left( a\right) g\left( a\right) +%
\frac{t\left( 1-t\right) }{4t\left( 1-t\right) }\left( f\left( a\right)
g\left( b\right) +f\left( b\right) g\left( a\right) \right) +\frac{\left(
1-t\right) ^{2}}{4t\left( 1-t\right) }f\left( b\right) g\left( b\right) \\
&\leq &\frac{t^{2}}{4t\left( 1-t\right) }f\left( a\right) g\left( a\right) +%
\frac{t\left( 1-t\right) }{4t\left( 1-t\right) }\left( f\left( a\right)
g\left( a\right) +f\left( b\right) g\left( b\right) \right) +\frac{\left(
1-t\right) ^{2}}{4t\left( 1-t\right) }f\left( b\right) g\left( b\right)
\end{eqnarray*}%
Consequently,%
\begin{eqnarray*}
&&4t\left( 1-t\right) f\left( ta+\left( 1-t\right) b\right) g\left(
ta+\left( 1-t\right) b\right) \\
&\leq &t^{2}f\left( a\right) g\left( a\right) +t\left( 1-t\right) \left[
f\left( a\right) g\left( a\right) +f\left( b\right) g\left( b\right) \right]
+\left( 1-t\right) ^{2}f\left( b\right) g\left( b\right) \\
&=&tf\left( a\right) g\left( a\right) +\left( 1-t\right) f\left( b\right)
g\left( b\right)
\end{eqnarray*}%
By integrating, we get%
\begin{equation*}
\frac{4}{b-a}\int_{a}^{b}\mu \left( x\right) f\left( x\right) g\left(
x\right) dx\leq \frac{1}{2}\left[ f\left( a\right) g\left( a\right) +f\left(
b\right) g\left( b\right) \right] .
\end{equation*}%
The proof is completed.
\end{proof}

\bigskip \bigskip

\bigskip

\end{document}